\newtheorem{theorem}{Theorem}[section]
\newtheorem{proposition}[theorem]{Proposition}
\newtheorem{corollary}[theorem]{Corollary}
\newtheorem{definition}[theorem]{Definition}
\newtheorem*{remark}{Remark}
\newcommand{\rn}{\mathbb{R}^{n}}
\newcommand{\R}{\mathbb{R}}
\newcommand{\Sl}{S_{\ell}}
\newcommand{\ul}{u_{\ell}}
\newcommand{\ui}{u_{\infty}}
\newcommand{\1}{\omega_{1}}
\newcommand{\2}{\omega_{2}}
 \numberwithin{equation}{section}
\newcommand{\myitem}[1]{%
\medskip \item[#1]\protected@edef\@currentlabel{#1}%
}
\title[SOLUTIONS OF QUASILINEAR p-LAPLACE EQUATION ON INFINITE CYLINDERS]{ASYMPTOTICS OF LARGE SOLUTIONS OF p-LAPLACE EQUATION ON CYLINDERS BECOMING UNBOUNDED}
\author[N. N. Dattatreya]{N. N. Dattatreya}
\address{\parbox{.8\linewidth}
{{\textbf{N. N. Dattatreya}}\medskip \\
Indian Institute of Technology - Kanpur, India \medskip}}
\curraddr{}
\email{dattatreya21@iitk.ac.in}
\date{}
\keywords{\ Boundary blow-up solution, Asymptotic behaviour, $p$-Laplace operator, unbounded domain, infinite cylindrical domain, Keller-Osserman condition}
\subjclass{35B44, 35A01, 35J92, 35J62, 35J25.  
}
\begin{document}
\begin{abstract}
In this article, we study the asymptotic behavior of large solutions for a quasi-linear equation involving the p-Laplacian, defined on a sequence of finite cylindrical domains converging to an infinite cylinder. We demonstrate that the sequence of solutions converges locally, in the Sobolev norm, to a solution of the corresponding cross-sectional problem. Moreover, we establish a convergence rate. As part of our analysis, we extend existing convergence results for the case $p\geq 2$, which previously lacked explicit convergence rates, to the range $1<p<2$. We additionally address solutions with finite Dirichlet boundary data within a unified framework and exhibit that this rate of convergence is independent of the boundary data.
\end{abstract}
\maketitle
\section{Introduction}
The problems motivated by hadron colliders and other parts of physics entail studying the asymptotic behaviour of the sequence of solutions to PDEs defined on finite cylinders becoming unbounded in a particular direction or directions. The goal is to capture the essence of the solution or solutions to a similar PDE on the corresponding infinite cylinder. Such types of asymptotics are studied for a broad class of equations, including linear, semilinear and quasilinear. While much of the study focuses on the equation involving a forcing term, the nonlinearity case is scarcely addressed. In this context, the present article investigates similar asymptotic results for quasilinear problems involving p-Laplacian, wich is defined as follows:
Let 
\begin{enumerate}
    \item $\1$ be a convex domain in $\R^{m}$ such that $0\in \1$.
    \item $\2$ be an open bounded domain in $\R^{n-m}$.
    \item  $\Sl=\Sl(\1,\2):=\ell\1\times\2$ and $S=S(\2):=\R^{m}\times \2$
    \item $f:[0,\infty)\to [0,\infty)$ be a non-decreasing continuous function such that $f(0)=0$.
    \item $g: S \to \R$ be a $W_{loc}^{1,p}(S)$ function such that $g(x)=g(X_{2})$ and $F(g(x))\in L_{loc}^{1}( S)$, where $x=(X_{1},X_{2})\in 
\partial S$, 
$X_{2}\in \partial \2$ and
    \begin{equation}\label{F}
        F(x)=\int_{0}^{x}f(s)\ ds.
    \end{equation}
\end{enumerate}
We consider
\begin{equation}\label{eqnonsl}
    \text{div}\left(|\nabla \ul|^{p-2}\nabla  \ul\right)=f(\ul) \quad \text{in}\ \Sl,
\end{equation}
with either of the following boundary conditions:
\begin{equation}\label{bdryonsl_one}
    \ul(x)=g(x) \quad \text{on}\ \partial \Sl,
\end{equation} 
\begin{equation}\label{bdryonsl_two}
    \ul(x)\to +\infty \quad \text{as}\ \text{dist}(x,\partial\Sl)\to 0,
\end{equation}
And the corresponding cross-sectional problem 
\begin{equation}\label{eqnoncross}
        \text{div}\left(|\nabla \ui|^{p-2}\nabla  \ui\right)=f(\ui) \quad \text{in}\ \2,
\end{equation}
where, the gradient is in $\R^{n-m}$. With the notation $x=(X_{1},X_{2})\in \Sl$, where $X_{1}\in \ell\1$ and $X_{2}\in \2$, we consider either of the following boundary conditions:
\begin{equation}\label{bdryoncross_one}
        \ui(X_{2})=g(X_{2}) \quad \text{on}\ \partial\2
\end{equation}
\begin{equation}\label{bdryoncross_two}
    \ui(X_{2})\to+\infty \quad \text{as}\ \text{dist}(X_{2},\partial\2)\to 0.
\end{equation}
Solutions that blow up at the boundary, as in \eqref{bdryonsl_two} and \eqref{bdryoncross_two} are called \textit{Boundary blow-up solutions} or \textit{Large solutions}, we refer to \cite{materothisis, IndroDatta2024} and the references therein for more details.
The necessary and sufficient condition on the nonlinearity $f$ that ensures the existence of a large solution is called the \textit{Keller-Osserman} condition: 
\begin{enumerate}
    \myitem{$\textbf{(A1)}$}\label{a1}
    \begin{equation*}
         \Psi_{p}(r):=\biggl(1-\frac{1}{p}\biggl)^{\frac{1}{p}}\int_{r}^{\infty}\frac{dx}{F(s)^{\frac{1}{p}}}<\infty \quad \text{for all}\ r>0,
    \end{equation*}
   where, $F$ is defined in \eqref{F}.
\end{enumerate}
The Keller-Osserman condition first appeared in 1957 in the independent works of Keller \cite{Keller1957} and Osserman \cite{Osserman1957}, which were to aid Electrodynamics and Geometry, respectively. For the existence of finite solutions, as in \eqref{bdryonsl_one} and \eqref{bdryoncross_one}, assumption \eqref{a1} is not required; the existence and uniqueness of a finite, locally $C^{1,\alpha}$ weak solution are addressed in \cite{diaz1985} and $C^{1,\alpha}$ regularity in \cite{DiBenedetto1983}. The existence of solutions can also be obtained by \textit{monotonicity methods} \cite{Evansbook2010}. For more regarding p-Laplacian we refer to \cite{lindqvist} and \cite{Ollimartiobook2006}. A priori estimates for positive solutions can be found in \cite{RosaLucio2018} and references therein. We refer to \cite{materopaper, diazG1993, materothisis} for the existence of large solutions. The uniqueness of a large solution requires an additional assumption on $f$, we refer to \cite{materothisis}, and the condition is as follows
\begin{enumerate}
\myitem{$\textbf{(A2)}$}\label{a2}
\begin{equation*}
\displaystyle \liminf_{t\to\infty}\frac{\Psi_{p}( t)}{\Psi_{p}(t)}>1\quad \text{for all}\ \in (0,1).
\end{equation*}
\end{enumerate}
For more on the uniqueness we refer to \cite{Zongming2007}. 
With this setup, we derive the following theorems, where, the first one addresses the asymptotics of solutions with finite boundary data
\begin{theorem}\label{thrm1}
    We assume \eqref{a1}. Let $\ul$ solve \eqref{eqnonsl}, \eqref{bdryonsl_one}, and $\ui$ solve \eqref{eqnoncross}, \eqref{bdryoncross_one}, then there exists a constant $C$ independent of $\ell$ such that 
    \begin{equation*}
        ||\nabla(\ul-\ui)||_{L^{p}(\Tilde{S})}\leq \frac{C}{\ell^{1/p}},
    \end{equation*}
    for any $\Tilde{S}\subset\subset S$ and $\ell$ sufficiently large.
    \end{theorem}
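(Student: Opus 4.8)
The plan is to extract a global energy estimate on $\Sl$ from a comparison argument, and then to localize it, at the rate $\ell^{-1/p}$, by a Caccioppoli-type iteration on nested sub-cylinders.

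\emph{Global estimate.} I use three facts. (i) $\ul$ is the unique minimizer of the convex functional $E_{\ell}(v)=\int_{\Sl}\bigl(\tfrac1p|\nabla v|^{p}+F(v)\bigr)\,dx$ over $g+W^{1,p}_{0}(\Sl)$, with $F$ as in \eqref{F}. (ii) The extension of $\ui$ that is constant in $X_{1}$ solves \eqref{eqnonsl} on all of $\Sl$, since $\nabla_{X_{1}}\ui\equiv0$ reduces it to \eqref{eqnoncross}. (iii) Because $\ul$ and $\ui$ both equal $g$ on the lateral boundary $\ell\1\times\partial\2$, each slice of $v_{\ell}:=\ul-\ui$ lies in $W^{1,p}_{0}(\2)$, so testing \eqref{eqnoncross} on $\{X_{1}\}\times\2$ against $v_{\ell}(X_{1},\cdot)$ and integrating over $X_{1}$ gives $\langle E_{\ell}'(\ui),v_{\ell}\rangle=0$ (equivalently, the conormal flux of $\ui$ across the caps $\ell\partial\1\times\2$ vanishes, the outer normal there being an $X_{1}$-direction). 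By convexity, $E_{\ell}(\ul)\ge E_{\ell}(\ui)+R_{\ell}$, where the uniform-convexity remainder satisfies $R_{\ell}\gtrsim\int_{\Sl}|\nabla v_{\ell}|^{p}$ for $p\ge2$ and $R_{\ell}\gtrsim\int_{\Sl}|\nabla v_{\ell}|^{2}(|\nabla\ul|+|\nabla\ui|)^{p-2}$ for $1<p<2$. For the matching upper bound I test the minimality of $\ul$ against the competitor $w_{\ell}=\eta_{\ell}\ui+(1-\eta_{\ell})g$, where $\eta_{\ell}=\eta_{\ell}(X_{1})$ equals $1$ on $(\ell-1)\1$ and $0$ on $\ell\partial\1$: then $w_{\ell}\in g+W^{1,p}_{0}(\Sl)$, it agrees with $\ui$ off a shell of unit width around the caps, and the integrability hypotheses on $f$ and $g$ give $E_{\ell}(\ul)\le E_{\ell}(w_{\ell})\le E_{\ell}(\ui)+C\ell^{m-1}$. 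Comparing, and — when $1<p<2$ — converting the weighted remainder back to $\int_{\Sl}|\nabla v_{\ell}|^{p}$ by Hölder's inequality together with the crude a priori bound $\int_{\Sl}|\nabla\ul|^{p}\lesssim\ell^{m}$ that the same minimality supplies, I obtain
\[
\int_{\Sl}|\nabla(\ul-\ui)|^{p}\,dx\ \le\ C\ell^{m-1}.
\]

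\emph{Localization.} Subtracting the two weak formulations of \eqref{eqnonsl}, $v_{\ell}$ satisfies
\[
\int_{\Sl}\bigl(|\nabla\ul|^{p-2}\nabla\ul-|\nabla\ui|^{p-2}\nabla\ui\bigr)\cdot\nabla\varphi\,dx+\int_{\Sl}\bigl(f(\ul)-f(\ui)\bigr)\varphi\,dx=0
\]
for every $\varphi$ vanishing on $\ell\1\times\partial\2$. I insert $\varphi=\psi^{p}v_{\ell}$, with $\psi=\psi(X_{1})$ a cutoff equal to $1$ on $t\1\times\2$, supported in $2t\1\times\2$, and $|\nabla\psi|\le C/t$. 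Monotonicity of $\plap$ and of $f$ make both diagonal terms nonnegative, so the identity reduces to an estimate for an error term supported in the dyadic annulus $(2t\1\setminus t\1)\times\2$; after Hölder's and Young's inequalities — for $1<p<2$ one first produces a weighted version and then converts it using the interior $C^{1,\alpha}$ bounds of \cite{DiBenedetto1983,diaz1985}, which are uniform in $\ell$ by translation invariance of \eqref{eqnonsl} — this bounds $\Phi(t):=\int_{t\1\times\2}|\nabla v_{\ell}|^{p}$ by $Ct^{-1}$ times powers of $\int_{(2t\1\setminus t\1)\times\2}|v_{\ell}|^{p}$ and of $\int_{(2t\1\setminus t\1)\times\2}(|\nabla\ul|+|\nabla\ui|)^{p}$. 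The cross-sectional Poincaré inequality $\|v_{\ell}(X_{1},\cdot)\|_{L^{p}(\2)}\le C_{\2}\|\nabla_{X_{2}}v_{\ell}(X_{1},\cdot)\|_{L^{p}(\2)}$, valid because $v_{\ell}(X_{1},\cdot)\in W^{1,p}_{0}(\2)$, replaces the first integral by $\Phi(2t)-\Phi(t)\le\Phi(2t)$, producing a recursion $\Phi(t)\le Ct^{\beta}\Phi(2t)^{\gamma}$ with $\gamma<1$. Starting from $\Phi(\ell)\le C\ell^{m-1}$ of the previous step and iterating over $t=\ell,\ell/2,\ell/4,\dots$ down to a fixed scale whose sub-cylinder contains $\Tilde{S}$ improves the exponent at each step and drives it to $-1$, so $\int_{\Tilde{S}}|\nabla(\ul-\ui)|^{p}\le C\ell^{-1}$, which is the assertion.

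\emph{Main obstacle.} The delicate work sits in the degenerate range $1<p<2$ and in the arithmetic of the iteration. When $1<p<2$, monotonicity of $\plap$ only delivers the weighted quantity $\int|\nabla v_{\ell}|^{2}(|\nabla\ul|+|\nabla\ui|)^{p-2}$, and recovering a genuine $L^{p}$ gain from it — both in the global estimate and inside the Caccioppoli step — without destroying the rate forces one to use the uniform-in-$\ell$ local gradient bounds and to balance exponents carefully in Young's and Hölder's inequalities; making the recursion close on exactly the exponent $-1$ (equivalently $\ell^{-1/p}$ in norm), rather than on a weaker one, is the heart of the matter. A secondary point is verifying that the comparison costs only $O(\ell^{m-1})$, which hinges on the vanishing conormal flux of $\ui$ across the caps and on $F(g)\in L^{1}_{loc}$. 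Since no feature of $g$ beyond the sizes of $g$ and $\ui$ enters the final constant, the rate is automatically independent of the boundary data.
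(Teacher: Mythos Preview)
Your route is genuinely different from the paper's, and considerably more elaborate. The paper does \emph{not} prove any global estimate on $\Sl$ and does \emph{not} iterate. It fixes $\Tilde S\subset\subset K\subset\subset S_{\ell/2}$ with $K$ allowed to stretch in the $X_1$-direction so that a single cutoff $\phi_\ell\in C_c^\infty(K)$ with $\phi_\ell=1$ on $\Tilde S$ and $|\nabla\phi_\ell|\le c_1/\ell$ exists. Testing the difference of the two weak formulations with $\psi_\ell=\phi_\ell^{p}(u-\ul)$ and using the vector inequality \eqref{vector inequlity} produces three terms $I_1,I_2,I_3$; the local $L^\infty$ bound (Corollary~\ref{corollary1}) and the local gradient bound (Proposition~\ref{prop_grad_lp_bound}) control $I_2,I_3$ by $C/\ell$ directly, while $I_1$ (the $f$-term) is handled by Poincar\'e on $\phi_\ell$. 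The entire factor $\ell^{-1}$ comes in one stroke from $|\nabla\phi_\ell|\le c_1/\ell$; no energy comparison, no competitor $w_\ell$, no cap analysis, and no dyadic recursion appear.

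Your scheme is closer in spirit to the Chipot--Rougirel Saint-Venant arguments, and it may well be made to work, but as written it has two soft spots. First, your global step needs $\nabla\ui\in L^p(\2)$ and $F(\ui),F(g)\in L^1(\2)$ up to $\partial\2$, which is stronger than the paper's purely local hypotheses on $g$; the paper sidesteps this entirely by never integrating over all of $\2$. Second, and more seriously, the localization step as you describe it does not close on the stated rate: the annuli $(2t\1\setminus t\1)\times\2$ reach $\partial\2$, so the interior $C^{1,\alpha}$ bounds you invoke do not apply there; and even granting suitable substitutes, a recursion of the form $\Phi(t)\le Ct^{\beta}\Phi(2t)^{\gamma}$ with $\gamma<1$ has a fixed-point exponent that depends on $\beta,\gamma,p,m$ and need not equal $-1$. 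You yourself flag ``making the recursion close on exactly the exponent $-1$'' as the heart of the matter, but you do not carry it out, and it is not automatic. The paper's one-cutoff argument avoids all of this bookkeeping.
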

And the second one addresses large solutions,
\begin{theorem}\label{thrm2}
    We assume \eqref{a1}. Let $\ul$ be the solution to \eqref{eqnonsl} and \eqref{bdryonsl_two}, then
    \begin{enumerate}
        \item There exist $u\in W_{loc}^{1,p}(S)$ such that $\ul \rightharpoonup u$ in $W^{1,p}_{loc}(S)$ and for every $X_{1}\in \1$, $v\left(X_{2}\right):=u\left(X_{1},X_{2}\right)$ solves \eqref{eqnoncross} with \eqref{bdryoncross_two}.
        \item With the assumption \eqref{a2}, $u(X_{1},X_{2})=\ui(X_{2})$ for every $X_{1}\in \1$.  
        \item There exists a positive constant $C$ independent of $\ell$  such that
    \begin{equation*}
        ||\nabla(\ul-u)||_{L^{p}(\Tilde{S})}\leq \frac{C}{\ell^{1/p}},
    \end{equation*}
    for any $\Tilde{S}\subset\subset S$ and $\ell$ sufficiently large.
     \end{enumerate}
\end{theorem}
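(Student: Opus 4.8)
The plan is to adapt the scheme behind Theorem~\ref{thrm1}, the new difficulty being that $\ul$ now blows up on $\partial\Sl$, so every estimate must be localized away from $\partial S$. For part~(1) I would first record a monotonicity and two a priori bounds. Since $\1$ is convex with $0\in\1$, the cylinders $\Sl$ increase with $\ell$, so the large solutions $\ul$ are \emph{decreasing} in $\ell$ by the comparison principle; reading $\ui$ as an $X_1$-independent function on $S$, it solves \eqref{eqnonsl} in $\Sl$ and lies below $\ul$ on $\partial\Sl$ (compare the finite-datum approximations $v^{(k)}\uparrow\ui$ on $\2$ and $\ul^{(k)}\uparrow\ul$ on $\Sl$, $k$ the constant boundary value), whence $\ui\le\ul$ in $\Sl$; and the interior Keller--Osserman estimate, got by comparing $\ul$ on balls $B_r(x)\subset\Sl$ with the radial supersolution built from $\Psi_p$ (available by \eqref{a1}), gives $\ul(x)\le\Phi\big(\mathrm{dist}(x,\partial\Sl)\big)$ with $\Phi$ depending only on $f,p,n$, so $\ui\le\ul\le\Phi\big(\mathrm{dist}(\,\cdot\,,\partial\2)\big)$ on any $\Tilde S\subset\subset S$ once $\ell$ is large. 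Hence $\ul\downarrow u$ pointwise with $u\in W^{1,p}_{loc}(S)\cap L^\infty_{loc}(S)$; a Caccioppoli inequality (testing \eqref{eqnonsl} against $\eta^p(\ul-k)$ with $k$ above the local supremum) together with the interior $C^{1,\alpha}$ estimates of \cite{DiBenedetto1983} gives uniform $C^{1,\alpha}_{loc}$ bounds, so $\ul\to u$ in $C^1_{loc}$ and, passing to the limit in the weak formulation, $u$ solves $\mathrm{div}(|\nabla u|^{p-2}\nabla u)=f(u)$ in $S$.

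To finish part~(1) I would show $\nabla_{X_1}u\equiv 0$, so that the $n$-dimensional equation for $u$ collapses to \eqref{eqnoncross} on $\2$. For fixed $h\in\R^m$, $\ul(X_1+h,X_2)$ is the large solution on $(\ell\1-h)\times\2$, and the convexity of $\1$ gives $(\ell-C)\1\subset\ell\1-h\subset(\ell+C)\1$ for all large $\ell$, with $C=C(\1,|h|)$; monotonicity in the domain then yields $u_{\ell+C}(X_1,X_2)\le\ul(X_1+h,X_2)\le u_{\ell-C}(X_1,X_2)$, and letting $\ell\to\infty$ squeezes $u(X_1+h,X_2)=u(X_1,X_2)$. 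Thus $u=u(X_2)=:v$ solves \eqref{eqnoncross}, and \eqref{bdryoncross_two} holds since $\ui\le v$ forces the lower blow-up while the Keller--Osserman bound gives $v(X_2)\le\Phi\big(\mathrm{dist}(X_2,\partial\2)\big)$. Part~(2) is then immediate: under \eqref{a2} the cross-sectional large solution is unique, so $v=\ui$ and $u(X_1,X_2)=\ui(X_2)$.

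The core is part~(3). Writing $w_\ell=\ul-u$, with $u$ now $X_1$-independent both $\ul$ and $u$ solve \eqref{eqnonsl} in $\Sl$, so I would subtract the weak formulations and test against $\zeta^p w_\ell$, where $\zeta=\zeta(X_1)$ equals $1$ on $\Tilde S$, vanishes near $\partial(\ell\1)$, and transitions over an $X_1$-region of width $\sim\ell$ so that $|\nabla\zeta|\lesssim\ell^{-1}$. Using the monotonicity of the $p$-Laplace operator on the leading term and of $f$ on the zero-order term one reaches, schematically,
\[
\int_{\Sl}\zeta^p\,|\nabla w_\ell|^p\ \lesssim\ \int_{\mathrm{supp}\,\nabla\zeta}|w_\ell|\,|\nabla\zeta|\,\big(|\nabla\ul|^{p-1}+|\nabla u|^{p-1}\big),
\]
after which the uniform local gradient bounds from part~(1), a fibrewise Poincar\'e inequality for $w_\ell$, and a Chipot-type iteration over the nested cylinders $\{|X_1|<t\}\times\2$, $t\le\ell$, should upgrade the crude $O(1)$ estimate to $\|\nabla w_\ell\|_{L^p(\Tilde S)}\le C\ell^{-1/p}$.

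Two points in this last step are the real obstacles, and they also account for the remaining claims in the abstract. First, $\zeta^p w_\ell$ is not literally an admissible test function because $w_\ell$ need not vanish on the lateral blow-up boundary $\ell\1\times\partial\2$; I would run the whole estimate at the level of the finite-datum approximations $\ul^{(k)}\uparrow\ul$ and $v^{(k)}\uparrow\ui$, whose difference \emph{does} vanish on the lateral boundary (and on the end caps, where $\zeta=0$), and pass to the limit in $k$ only at the end --- this is exactly what puts \eqref{bdryonsl_one} and \eqref{bdryonsl_two} on a common footing and makes the rate independent of the boundary data. Second, the elementary inequality $(|\xi|^{p-2}\xi-|\eta|^{p-2}\eta)\cdot(\xi-\eta)\gtrsim|\xi-\eta|^p$ holds only for $p\ge2$; for $1<p<2$ one has merely $(|\xi|^{p-2}\xi-|\eta|^{p-2}\eta)\cdot(\xi-\eta)\gtrsim|\xi-\eta|^2(|\xi|+|\eta|)^{p-2}$, so to recover an $L^p$ bound for $\nabla w_\ell$ one must interpolate, via H\"older, against the uniform local gradient bounds of part~(1) --- this is the extension to $1<p<2$. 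I expect the bookkeeping in the iteration --- producing the exponent $1/p$ with a constant independent of $\ell$ --- to be the most delicate part of the proof.
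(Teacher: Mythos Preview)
Your treatment of parts (1) and (2) follows the paper's line: the monotone pointwise limit via comparison, the Keller--Osserman interior bound, the local gradient estimate, and then the identification of $u$ as an $X_1$-independent large solution on $\omega_2$. Your shift argument for $X_1$-independence is the standard one (the paper simply cites \cite{IndroDatta2024} for it), and upgrading to $C^{1}_{loc}$ via DiBenedetto is harmless though stronger than what is needed---the paper works with weak $W^{1,p}_{loc}$ convergence only. Your handling of the $1<p<2$ case by H\"older interpolation against the uniform local gradient bounds is also exactly what the paper does in Section~4.

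Where you diverge is part (3). You choose a cutoff $\zeta=\zeta(X_1)$ that does not vanish near the lateral blow-up boundary $\ell\omega_1\times\partial\omega_2$; this forces you into the finite-datum approximation $u_\ell^{(k)},v^{(k)}$ to make $\zeta^p w_\ell$ admissible, and then into a Chipot-type hole-filling iteration over nested $X_1$-cylinders to extract the exponent $1/p$. The paper sidesteps both devices: it takes the cutoff $\phi_\ell\in C_c^\infty(K)$ with $K\subset\subset S$, i.e.\ compactly supported away from the lateral boundary as well, so $\phi_\ell^p(u-u_\ell)\in W_0^{1,p}(K)$ is automatically an admissible test function against both equations---no approximation is needed, since both $u_\ell$ and the limit $u$ are bounded on $K$. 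The rate then falls out in \emph{one shot}: the zero-order term $I_1=\int\phi_\ell^p(f(u_\ell)-f(u))(u-u_\ell)$ is controlled by $\int|\phi_\ell|^p$ via the uniform local bounds and then by $\int|\nabla\phi_\ell|^p$ via Poincar\'e, while the cross terms $I_2,I_3$ pick up a single factor of $\sup|\nabla\phi_\ell|$; choosing $\phi_\ell$ with $|\nabla\phi_\ell|\lesssim\ell^{-1}$ gives $\int_{\tilde S}|\nabla(u_\ell-u)|^p\le C/\ell$ directly. Your route would also reach the goal, but the paper's choice of a \emph{fully} interior cutoff is what makes the approximation layer and the iteration unnecessary.
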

The theorems are stated separately for clarity, but they will be proved simultaneously. The assumption \eqref{a1} in Theorem \ref{thrm1} is a technical requirement; to drop it, one would need to replace Proposition \ref{prop_weak_local_bound} with an alternative bound that does not rely on the existence of a large solution in one dimension. Our approach unifies arguments for large solutions and finite solutions. Also, we aim to obtain a convergence rate for large solutions as an addendum to the work \cite{IndroDatta2024} and extend the results to solutions with finite boundary values.

Cylindrical domains naturally arise in various applications, such as measuring pressure in water or gas pipelines, modelling gas flow in pipes \cite{Gasequation}, and theoretically in the study of cylindrical gravitational waves, which originate from cylindrical sources and propagate along the axial direction \cite{BuzzMatKir2023}, They also appear in the context of porous media flow \cite{BenHua2015}. \\
The asymptotic behaviour of solutions in the above-mentioned settings has been extensively studied for various operators having forcing terms with different boundary conditions and eigenvalue problems. And we refer to some of the works \cite{ChipotRougirel2002, ChipoXie2004, ChioptRoyShafi2013, PJana2024}. A study of said asymptotics of solutions is carried out in the case of a uniformly elliptic operator with Dirichlet boundary data, and the convergence of eigenvectors is studied in \cite{ChipotRougirel2008}. Semilinear equations are studied in \cite{ChipoPino2017}.
\smallskip

This work extends the results discussed in \cite{IndroDatta2024} in three aspects: While the cited article focuses on infinite boundary data, our focus is on both finite and infinite boundary data. The result in \cite{IndroDatta2024} is established for $2\leq p< \infty$. In the current article, we extend them to $1<p<2$. Additionally, this article provides a rate for convergence. Although the proof is a simple bookkeeping, the rate and the extension to $1<p<2$ case together justify the article. 
\smallskip

In section 2, we give definitions and preliminary results. In section 3, we prove Theorem \ref{thrm1}, and Theorem \ref{thrm2} with an additional assumption \eqref{a2}. Section 4 discusses the loss of uniqueness; here, we also give the proof of the case $1<p<2$.

\section{Preliminaries}
We began by defining the notation of a \textit{solution}, \textit{subsolutions} and \textit{supersolutions}, followed by a statement of \textit{Comparison Principle}. Subsequently, we derive bounds for $\{\ul\}_{\ell}$ and $\left\{||\nabla \ul||_{L^{p}}\right\}_{\ell}$. However, before proceeding, we must examine a one-dimensional problem. For $r>0$, consider
\begin{equation*}
\begin{cases}
    \left(\left|\phi'\right|^{p-2}\phi'\right)'=f\left(\phi\right) \quad &\text{in}\ (-r,r)\\
    \phi(t)\to +\infty \quad &\text{as}\ t\to \pm r
    \end{cases}
\end{equation*}
one can implicitly solve the above equation uniquely, see \cite{IndroDatta2024}. However, we are interested in the convexity of $\phi$. By the product rule, we have 
\begin{equation*}
    (p-1)|\phi'|^{p-2}\phi''=f(\phi).
\end{equation*}
Now, the convexity of $\phi$ follows the sign of $f$. Also, since $\phi(-t)$ solves the above equation, by uniqueness, $\phi$ must be symmetric about the origin. With which we infer that $\phi$ is increasing in $(0,r)$.
\begin{definition}\label{dfn_solution}
By a weak solution of div$\left(|\nabla u|^{p-2}\nabla u\right)= f(u)$ in $\Omega$, a domain in $\rn$, we mean function $u\in W_{loc}^{1,p}(\Omega)$ satisfying 
\begin{align*}
    -\int_{\Omega'}|\nabla u|^{p-2}\nabla u\cdot\nabla \phi \ dx=\int_{\Omega'}f(u)\phi \ dx \quad \forall~ \phi\in W_{0}^{1,p}(\Omega'),
\end{align*}
and for every $\Omega'\subset\subset\Omega$.
Moreover, a function $u\in W_{loc}^{1,p}(\Omega)$ is a \textit{weak subsolution} (or weak supersolution) to div$\left(|\nabla u|^{p-2}\nabla u\right)= f(u)$ in $\Omega$, a domain in $\rn$, if
\begin{multline*}
    -\int_{\Omega'}|\nabla u|^{p-2} \nabla u\cdot\nabla \phi \ dx\geq \int_{\Omega'}f(u)\phi \ dx  \ \  \left(\text{or}, \leq  \int_{\Omega'}f(u)\phi \ dx \right) \ \forall~ \phi\in W_{0}^{1,p}(\Omega'),\phi\geq 0,
\end{multline*} 
    and for every $\Omega'\subset \subset \Omega$.
\end{definition}
Where, by $\Omega'\subset\subset\Omega$, we mean that $\overline{\Omega'}$ is a compact subset of $\Omega$.

\begin{proposition}[\cite{diazG1993}]{(Comparison Principle)}\label{prop_comparison}
    If $u,v\in W_{loc}^{1,p}(\Omega)\cap C(\Omega)$ be such that 
    \begin{align*}
        -\text{div}\left(|\nabla u|^{p-2} \nabla u\right)+ f(u)\leq -\text{div}\left(|\nabla v|^{p-2} \nabla v\right)+ f(v)\quad \text{weakly in}\ \Omega,
    \end{align*}
and that, 
\begin{align*}
    \limsup \frac{u(x)}{v(x)}\leq 1\quad \text{as}~dist(x,\partial\Omega)\to 0.
\end{align*} Then $u\leq v$ in $\Omega$. In particular, if $u$ is a weak subsolution and $v$ is a weak supersolution with above boundary behaviour, then the result will hold.
\end{proposition}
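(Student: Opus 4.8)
The plan is to run the standard $p$-Laplacian comparison argument: turn the weak differential inequality in the hypothesis into the pointwise bound $u\le v$ by testing it against a function supported on the set where $u$ exceeds $v$, and then exploit the two monotonicities at hand — strict monotonicity of the vector field $\xi\mapsto|\xi|^{p-2}\xi$ on $\rn$ (valid for every $p>1$, so the argument is uniform in $p$ and does not distinguish $1<p<2$ from $p\ge 2$) and monotonicity of $f$. The boundary hypothesis enters only to make the chosen test function admissible.

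First I would dispose of the case in which $u-v$ is non-positive near $\partial\Omega$, so that for every $\delta>0$ the truncation $\phi_{\delta}:=(u-v-\delta)^{+}$ is continuous and compactly supported in $\Omega$ (this already covers solutions sharing the same finite Dirichlet data). Since $\phi_{\delta}\ge 0$ lies in $W^{1,p}_{0}$ of a subdomain compactly contained in $\Omega$, it is a legitimate test function. Plugging it into the hypothesis and restricting everything to the open set $\{u>v+\delta\}$, on which $\nabla\phi_{\delta}=\nabla(u-v)$ and, by monotonicity of $f$, $f(u)\ge f(v)$, one is left with
\[
0\ \le\ \int_{\{u>v+\delta\}}\bigl(|\nabla u|^{p-2}\nabla u-|\nabla v|^{p-2}\nabla v\bigr)\cdot\nabla(u-v)\,dx\ \le\ \int_{\{u>v+\delta\}}\bigl(f(v)-f(u)\bigr)\phi_{\delta}\,dx\ \le\ 0 .
\]
Hence the non-negative integrand on the left vanishes a.e., so by strict monotonicity $\nabla u=\nabla v$ a.e.\ on $\{u>v+\delta\}$; thus $\nabla\phi_{\delta}\equiv0$, and a continuous $W^{1,p}_{0}$ function with vanishing gradient is identically zero. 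Letting $\delta\downarrow0$ gives $u\le v$.

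For the genuine hypothesis $\limsup_{dist(x,\partial\Omega)\to0}u(x)/v(x)\le1$ — in particular the boundary-blow-up case, where $v\to+\infty$ on $\partial\Omega$ and $(u-v)^{+}$ need not be compactly supported — I would instead compare $u$ with the dilate $\tilde v:=(1+\varepsilon)v$ for a fixed $\varepsilon>0$. Solutions here are non-negative ($f\ge0$, $f(0)=0$) and, by the strong maximum principle, $v>0$ in the interior; together with $\limsup u/v\le1<1+\varepsilon$ this forces the open set $\Omega_{\varepsilon}:=\{u>\tilde v\}$ to stay at positive distance from $\partial\Omega$, so that $(u-\tilde v)^{+}$ is again an admissible test function. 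Noting that $\tilde v$ is still a weak supersolution of the same equation — the factor $(1+\varepsilon)^{p-1}$ produced by $p$-homogeneity, $|\nabla\tilde v|^{p-2}\nabla\tilde v=(1+\varepsilon)^{p-1}|\nabla v|^{p-2}\nabla v$, being absorbed by the monotonicity of $f$, cf.\ \cite{diazG1993} — one repeats the argument of Step 1 with $\tilde v$ in place of $v$ (using on $\Omega_{\varepsilon}$ that $u>\tilde v\ge v$, hence $f(u)\ge f(v)$) and obtains $u\le\tilde v$ on $\Omega_{\varepsilon}$. This contradicts the definition of $\Omega_{\varepsilon}$ unless $\Omega_{\varepsilon}=\emptyset$; so $u\le(1+\varepsilon)v$ throughout $\Omega$ for every $\varepsilon>0$, and letting $\varepsilon\downarrow0$ completes the proof. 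The ``in particular'' case (a weak subsolution versus a weak supersolution, which indeed satisfies the combined weak inequality of the hypothesis) is exactly the situation to which this second step applies.

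I expect this last reduction to be the main obstacle. For large solutions the only boundary control at one's disposal is the multiplicative one, $u\lesssim v$, not the additive $u\le v+o(1)$, so one cannot test with $(u-v)^{+}$ directly; replacing $v$ by the competitor $(1+\varepsilon)v$ cures the support problem but introduces the dilation factor $(1+\varepsilon)^{p-1}$ coming from the $p$-homogeneity of the principal part, and disposing of it is precisely the point where one leans on more than the bare monotonicity of $f$ (or, alternatively, on a priori bounds for the boundary blow-up rate). The remaining ingredients — measurability and compact support of $\phi_{\delta}$ and $(u-\tilde v)^{+}$, the monotonicity bookkeeping, and the passages to the limit $\delta,\varepsilon\downarrow0$ — are routine.
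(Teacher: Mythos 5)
The paper does not prove this proposition at all: it is imported verbatim from \cite{diazG1993}, so there is no in-paper argument to measure you against, and a self-contained proof is a reasonable thing to attempt. Your first step (testing the weak inequality with $(u-v-\delta)^{+}$ when the set $\{u>v+\delta\}$ is compactly contained, using monotonicity of $\xi\mapsto|\xi|^{p-2}\xi$ and of $f$, and concluding via vanishing of the gradient of a compactly supported $W^{1,p}$ function) is correct and is the standard mechanism.

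The genuine gap is exactly where you suspect it, and it is not a removable technicality within the stated hypotheses: your second step needs $\tilde v=(1+\varepsilon)v$ to be a weak supersolution of the \emph{same} equation, and the parenthetical claim that the factor $(1+\varepsilon)^{p-1}$ is ``absorbed by the monotonicity of $f$'' is false. By $p$-homogeneity, $\operatorname{div}\bigl(|\nabla\tilde v|^{p-2}\nabla\tilde v\bigr)=(1+\varepsilon)^{p-1}\operatorname{div}\bigl(|\nabla v|^{p-2}\nabla v\bigr)\le(1+\varepsilon)^{p-1}f(v)$, and to conclude $\le f\bigl((1+\varepsilon)v\bigr)$ one needs $f(\lambda t)\ge\lambda^{p-1}f(t)$ for $\lambda\ge1$, i.e.\ that $t\mapsto f(t)/t^{p-1}$ is nondecreasing (or some comparable structural condition on $f$). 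Mere monotonicity of $f$ gives only $f\bigl((1+\varepsilon)v\bigr)\ge f(v)$, which is weaker whenever $(1+\varepsilon)^{p-1}>1$, i.e.\ for every $p>1$. Nor can you instead keep $v$ and test the original inequality with $\bigl(u-(1+\varepsilon)v\bigr)^{+}$: the resulting quadratic form $\bigl(|\nabla u|^{p-2}\nabla u-|\nabla v|^{p-2}\nabla v\bigr)\cdot\bigl(\nabla u-(1+\varepsilon)\nabla v\bigr)$ is no longer sign-definite. So the boundary-blow-up case — the only case in which the multiplicative hypothesis $\limsup u/v\le1$ is the natural one, and hence the whole point of the proposition — remains unproved under the paper's bare assumptions on $f$; the cited reference \cite{diazG1993} in fact works under additional structure on $f$ (and the paper's assumption \ref{a2} plays a related role for uniqueness). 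You correctly identified this as the main obstacle, but identifying it is not the same as closing it, so the proposal as written is incomplete at its decisive step.
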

We now establish bounds; proofs of which are present in \cite{IndroDatta2024}. The proof of the first precisely follow argument in \cite[Proposition 3.3]{IndroDatta2024}, and is outlined for completeness.

\begin{proposition}\label{prop_weak_local_bound}
    Assume \eqref{a1}, let  $u\in W_{loc}^{1,p}(\Omega)\cap C(\Omega)$ be a weak subsolution of  div$\left(|\nabla u|^{p-2}\nabla u\right)= f(u)$ in a bounded domain $\Omega$. Also let $x_{0}\in \Omega$, for any $R>0$ such that $B_{R}(x_{0})\subset \subset \Omega$, then, we can infer that
    \begin{equation}\label{eqn_local_bound}
        u(x)\leq \phi\left(\frac{R}{2}\right) \quad \text{for all } x\in B_{\frac{R}{2}}(x_{0}),
    \end{equation}
    where $\phi$ solves
     \begin{equation}
        \begin{cases}
        \left(\left|\phi'(t)\right|^{p-2}\phi'(t)\right)'=f\left(\phi(t)\right) \quad &\text{in } (-R,R)\\
            \phi(t)\to\infty \quad &\text{as } t\to\pm R.
        \end{cases}
    \end{equation}
\end{proposition}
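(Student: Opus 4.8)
The plan is to localise to the ball $B_R(x_0)$, to bound $u$ there by comparison with a barrier that blows up on $\partial B_R(x_0)$, and then to restrict the resulting inequality to the concentric ball $B_{R/2}(x_0)$. Assumption \eqref{a1} enters in two ways. First, as recalled just above, it makes the one–dimensional problem $\big(|\phi'|^{p-2}\phi'\big)'=f(\phi)$ on $(-R,R)$ with $\phi(t)\to+\infty$ at $\pm R$ uniquely solvable, with $\phi$ even, convex, and increasing on $(0,R)$; in particular $\phi(t)\le\phi(R/2)$ whenever $|t|\le R/2$. Second, the same Keller–Osserman condition yields a barrier on the ball itself: a radial solution $\Phi$ of $\operatorname{div}\big(|\nabla\Phi|^{p-2}\nabla\Phi\big)=f(\Phi)$ on $B_R(x_0)$ with $\Phi(x)\to+\infty$ as $\operatorname{dist}\big(x,\partial B_R(x_0)\big)\to0$, obtained by solving the corresponding radial ODE; in particular $\Phi$ is a weak supersolution on $B_R(x_0)$, and it is at this point that \eqref{a1} is genuinely needed.

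Next I would apply the Comparison Principle, \autoref{prop_comparison}, on $\Omega'=B_R(x_0)$ to the subsolution $u$ and the supersolution $\Phi$. Since $u\in C(\Omega)$ is bounded on the compact set $\overline{B_R(x_0)}\subset\Omega$ while $\Phi(x)\to+\infty$ as $\operatorname{dist}\big(x,\partial B_R(x_0)\big)\to0$, we have $u(x)/\Phi(x)\to0$ there, so the boundary hypothesis $\limsup u/\Phi\le1$ holds automatically and the principle gives $u\le\Phi$ throughout $B_R(x_0)$.

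It remains to read off the estimate on $B_{R/2}(x_0)$. For $x$ with $|x-x_0|\le R/2$, the value $\Phi(x)$ is the radial profile of $\Phi$ at radius $|x-x_0|\le R/2$, hence, since that profile is increasing, is at most its value at radius $R/2$; comparing this profile with the one–dimensional solution $\phi$ on $(-R,R)$ then bounds it by $\phi(R/2)$, which is exactly \eqref{eqn_local_bound}. The substantive steps are thus (i) the construction, from \eqref{a1}, of the boundary–blow–up barrier $\Phi$ on $B_R(x_0)$ with the required monotone radial profile, and (ii) the bookkeeping that identifies the resulting constant as $\phi(R/2)$, uniformly in $x\in B_{R/2}(x_0)$; the remaining steps are a verbatim application of \autoref{prop_comparison} and of the one–dimensional facts recalled above, which is the argument of \cite[Proposition 3.3]{IndroDatta2024}.
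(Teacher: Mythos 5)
Your first two steps run parallel to the paper's strategy---under \eqref{a1} build a radial barrier on $B_R(x_0)$ blowing up on $\partial B_R(x_0)$ and invoke Proposition~\ref{prop_comparison} to get $u\le$ barrier---but your final step, where you pass from the $n$-dimensional radial profile $\Phi$ back to the one-dimensional solution $\phi$, has the comparison pointing the wrong way. Writing $r=|x-x_0|$, the radial profile of your $\Phi$ satisfies
\begin{equation*}
\left(|\Phi'|^{p-2}\Phi'\right)'(r)=f(\Phi(r))-\frac{n-1}{r}\,|\Phi'(r)|^{p-2}\Phi'(r)\le f(\Phi(r)),
\end{equation*}
since $\Phi'\ge 0$; hence along a diameter $\Phi$ is a \emph{supersolution} of the one-dimensional problem on $(-R,R)$, while $\phi$ is an exact solution, and both blow up at $\pm R$ with the same Keller--Osserman rate. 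The one-dimensional comparison principle therefore yields $\phi\le\Phi$ on $(-R,R)$, not $\Phi\le\phi$, so the inequality $\Phi(R/2)\le\phi(R/2)$ that your last paragraph relies on is unavailable (and is in general false, with strict inequality the other way). This is a genuine gap rather than bookkeeping: it is precisely the step that is supposed to convert the barrier bound into the explicit constant $\phi(R/2)$ of \eqref{eqn_local_bound}.

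For comparison, the paper does not introduce the $n$-dimensional radial large solution at all: it takes $v(x)=\phi(|x-x_0|)$ \emph{itself} as the comparison function, argues via a polar-coordinate computation that $v$ can be played against $u$ in Proposition~\ref{prop_comparison}, and then reads off $\phi(R/2)$ directly from the monotonicity of $\phi$ on $(0,R)$, with no second comparison needed. Be aware, though, that the same first-order term $\frac{n-1}{r}|\phi'|^{p-2}\phi'\ge 0$ is exactly what makes $\phi(|x-x_0|)$ a subsolution rather than a supersolution of the $n$-dimensional equation, so any complete version of this argument must account for that term explicitly; your detour through $\Phi$ isolates this difficulty cleanly but, as written, does not resolve it.
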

 \begin{proof}
     In the proof, we construct a weak large solution $v$ to \eqref{eqnonsl} and \eqref{bdryonsl_two} using the function $\phi$. Then by comparing $v$ with the solution $u$, whether $u$ has finite boundary values or not, we derive the desired bound.

      Define $v(x)=\phi(|x-x_{0}|)\in C(B_{R}(x_{0}))$. For any $\psi\in C_{c}^{\infty}(B_{R}(x_{0}))$, we denote $x=x_{0}+\frac{t}{R}\theta(x)$, where, $t\in (0,R)$ and $\theta(x)\in \partial B_{R}(x_{0})$, it follows by polar coordinate, that
     \begin{equation*}
     \begin{split}
    &\int_{B_{R}(x_{0})}|\nabla v|^{p-2}\nabla v\cdot \nabla\psi \ dx\\
    &=\int_{B_{R}(x_{0})}|\phi'(|x-x_{0})|^{p-2}\phi'(|x-x_{0}|)) \frac{x-x_{0}}{|x-x_{0}|}\cdot \nabla \psi \ dx\\
      &=\int_{0}^{R}\int_{\partial B_{t}(x_{0})}|\phi'(t)|^{p-2}\phi'(t) \theta(x) \cdot \nabla \psi(t,x)  \ dH(x) dt\\
        &=\int_{0}^{R}|\phi'(t)|^{p-2}\phi'(t)~ \int_{\partial B_{R}(x_{0})}\frac{\partial \psi}{\partial t}(t,x) \frac{t^{n-1}}{R^{n}} \ dH(x) dt.\\ 
         \end{split}
     \end{equation*}
    as $\frac{\partial \psi}{\partial t}(t,x)=\theta(x) \cdot \nabla \phi(t,x)$.\\
    Define 
    \begin{equation*}
        \eta(t)=\begin{cases}
            \int_{0}^{t} \int_{\partial B_{R}(x_{0})}\frac{\partial}{\partial s}\left(\psi(s,x) \frac{s^{n-1}}{R^{n}}\right) \ dH(x) ds \quad &\text{for }t> 0\\
            0 \quad &\text{for } t\leq 0, 
        \end{cases}
    \end{equation*}
    which is a $C_{c}(\mathbb{R})$ function, as $\phi\in C_{c}^{\infty}(B_{R}(x_{0}))$. 
Which implies that $v$ is a solution, since $\eta$ acts as a test function against $\phi$.
    $v(x)\to \infty$ as $|x|\to R$, since $u<\infty$ in $B_{R}(x_{0})$, comparison principle implies 
    \begin{equation*}
        u(x)\leq v(x)\quad \text{for all }x\in B_{R}(x_{0}).
    \end{equation*}
    Because $\phi$ is increasing in $(0,R)$, we get \eqref{eqn_local_bound}. 
    \end{proof} 
 Next, we state how the bound obtained above helps us with varying domains.
\begin{corollary}\label{corollary1}
  The family of solution $\{\ul\}_{\ell}$ of \eqref{eqnonsl}, with either \eqref{bdryonsl_one} or \eqref{bdryonsl_two} is locally uniformly bounded for $\ell$ sufficiently large. That is, for any compact subset $K$ of $S$, one can find $\ell(K)>0$  and $M(K)>0$ such that $|\ul(x)|\leq M(K)$ for all $\ell\geq \ell(K)$ and for all $x\in K$.
\end{corollary}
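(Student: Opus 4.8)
The plan is to get the upper bound from Proposition \ref{prop_weak_local_bound} applied with a radius that depends only on $K$ (not on $\ell$), and to get the lower bound from the Comparison Principle against a constant.

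First I would fix the geometry so that the comparison function in Proposition \ref{prop_weak_local_bound} can be chosen uniformly. Since $K$ is compact and $K\subset S=\R^{m}\times\2$, we have $d:=\mathrm{dist}(K,\partial S)>0$; put $R:=d/2$. Because $0\in\1$ and $\1$ is open, there is $\rho>0$ with $B^{m}_{\rho}(0)\subset\1$, hence $B^{m}_{\ell\rho}(0)\subset\ell\1$; let $L$ bound the $\R^{m}$-component of every point of $K$. Then for $\ell\geq\ell(K)$ (any $\ell$ with $\ell\rho>L+R$) and every $x_{0}\in K$ one checks that $\overline{B_{R}(x_{0})}\subset \ell\1\times\2=\Sl$: the $X_{1}$-components stay in $\overline{B^{m}_{L+R}(0)}\subset\ell\1$, and the $X_{2}$-components stay in $\2$ because $\mathrm{dist}(X_{2},\partial\2)\geq d-R=R>0$ and $\2$ is a (connected) domain. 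Now fix such $\ell$ and $x_{0}$. As $\ul$ solves \eqref{eqnonsl} it is in particular a weak subsolution of $\mathrm{div}(|\nabla\ul|^{p-2}\nabla\ul)=f(\ul)$ in $\Sl$, and it is continuous by the interior $C^{1,\alpha}$ regularity of \cite{DiBenedetto1983, diaz1985}; Proposition \ref{prop_weak_local_bound} then gives $\ul(x_{0})\leq\phi(R/2)$, where $\phi$ is the one-dimensional large solution on $(-R,R)$, which depends only on $f$ and $R$, hence only on $K$. Thus $\sup_{K}\ul\leq\phi(R/2)=:M_{1}(K)$, uniformly in $\ell\geq\ell(K)$.

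For the lower bound, note that any constant $c$ with $f(c)=0$ is a weak solution of \eqref{eqnonsl}. In the blow-up case \eqref{bdryonsl_two} take $c=0$ (recall $f(0)=0$): since $\ul\to+\infty$ on $\partial\Sl$ we have $\limsup c/\ul\leq 1$ as $\mathrm{dist}(x,\partial\Sl)\to 0$, so Proposition \ref{prop_comparison} yields $\ul\geq 0$ in $\Sl$. In the finite case \eqref{bdryonsl_one} take $c=\min\{0,\inf_{\partial\2}g\}$, which is finite and independent of $\ell$ since $g=g(X_{2})$ and $\partial\2$ is compact; comparing against the ordered boundary data $\ul=g\geq c$ on $\partial\Sl$ (using the convention $f\equiv 0$ on $(-\infty,0)$ so that $c$ is an admissible competitor) gives $\ul\geq c=:-M_{2}(K)$ in $\Sl$. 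Setting $M(K):=\max\{M_{1}(K),M_{2}(K)\}$ completes the proof.

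The main obstacle is the uniformity in $\ell$: the whole argument rests on being able to inscribe a fixed-radius ball $B_{R}(x_{0})$ inside $\Sl$ around every point of $K$ simultaneously once $\ell$ is large, so that the single, $\ell$-independent comparison profile $\phi$ of Proposition \ref{prop_weak_local_bound} controls $\ul$ on all of $K$ for all large $\ell$; this is precisely where $0\in\1$ and the openness (convexity) of $\1$ enter. A minor technical point is the sign and trace bookkeeping in the finite-boundary case, which only requires the standard ordered-boundary-data form of the Comparison Principle together with the extension of $f$ by zero on the negative half-line.
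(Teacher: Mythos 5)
Your proposal is correct and is essentially the argument the paper intends: the paper states the corollary without proof as a direct consequence of Proposition \ref{prop_weak_local_bound} (deferring to \cite{IndroDatta2024}), and your geometric observation that for $\ell\geq\ell(K)$ a ball of fixed radius $R=R(K)$ centred at any point of $K$ is compactly contained in $\Sl$, so that the single one-dimensional profile $\phi$ bounds $\ul$ on $K$ uniformly in $\ell$, is exactly the missing step. Your lower-bound discussion is also fine, and in fact simplifies: since $f$ and $F$ are only defined on $[0,\infty)$ the data $g$ is implicitly non-negative, so comparison with the constant $0$ suffices in both boundary regimes.
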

\begin{proposition}\label{prop_grad_lp_bound}
     Let $u\in W_{loc}^{1,p}(\Omega)$ be any weak subsolution to div$\left(Q|\nabla u|)\nabla u\right)=f(u)$ in a bounded domain $\Omega$, then 
     \begin{equation*}
          \int_{K}|\nabla u|^{p} \ dx\leq 2f(\Lambda)\Lambda|K_{1}|+ \frac{2^{2p}(p-1)^{p-1}}{p^{p}}\|\nabla \phi\|^{p}_{L^{p}(K_{1})}\Lambda^{p},
     \end{equation*} 
      for any $K\subset\subset K_{1}\subset\subset \Omega$, where $\Lambda:=\|u\|_{L^{\infty}(K_{1})}$.
\end{proposition}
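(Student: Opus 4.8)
The plan is to derive a Caccioppoli-type energy inequality for the $p$-Laplacian. Fix $K\subset\subset K_{1}\subset\subset\Omega$ and let $\phi$ be the cut-off function tacitly appearing in the statement, i.e.\ $\phi\in C_{c}^{\infty}(K_{1})$ with $0\le\phi\le1$, $\phi\equiv1$ on $K$ and $\operatorname{supp}\phi\subset K_{1}$; write $\Lambda=\|u\|_{L^{\infty}(K_{1})}$, which is finite because $\overline{K_{1}}$ is a compact subset of $\Omega$. The first step is to test the weak subsolution inequality from Definition~\ref{dfn_solution} against $\psi:=\phi^{p}(u+\Lambda)$. This $\psi$ is admissible: since $u\in W_{loc}^{1,p}(\Omega)$ and $\phi$ is compactly supported in $K_{1}$, we have $\psi\in W_{0}^{1,p}(K_{1})$, and $\psi\ge0$ on $K_{1}$ because $u\ge-\Lambda$ there, which is what makes $\psi$ legitimate for a mere subsolution and not only for a solution.

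Next I would expand $\nabla\psi=p\phi^{p-1}(u+\Lambda)\nabla\phi+\phi^{p}\nabla u$ and rearrange the resulting inequality so that the energy term $\int_{K_{1}}\phi^{p}|\nabla u|^{p}\,dx$ stands alone on one side. Two terms remain to be controlled. The reaction term $\int f(u)\,\phi^{p}(u+\Lambda)\,dx$ is handled by monotonicity and nonnegativity of $f$: on $\operatorname{supp}\phi$ one has $0\le f(u)\le f(\Lambda)$ and $0\le\phi^{p}(u+\Lambda)\le2\Lambda$, so this term contributes at most $2f(\Lambda)\Lambda|K_{1}|$ --- and, having the favourable sign for a subsolution, it could in fact be discarded, so bounding it crudely by this nonnegative quantity is exactly what produces the first summand in the claimed estimate. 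The cross term $p\int\phi^{p-1}(u+\Lambda)\,|\nabla u|^{p-2}\nabla u\cdot\nabla\phi\,dx$ is estimated pointwise by $2p\Lambda\,\phi^{p-1}|\nabla u|^{p-1}|\nabla\phi|$, and then Young's inequality with conjugate exponents $p/(p-1)$ and $p$ is applied to the pair $\phi^{p-1}|\nabla u|^{p-1}$ and a multiple of $\Lambda|\nabla\phi|$. Here the matching of powers, $\bigl(\phi^{p-1}|\nabla u|^{p-1}\bigr)^{p/(p-1)}=\phi^{p}|\nabla u|^{p}$, is precisely what lets the resulting $\int\phi^{p}|\nabla u|^{p}\,dx$ be absorbed into the left-hand side once the free parameter in Young's inequality is chosen appropriately; the leftover term is a constant depending only on $p$ times $\Lambda^{p}\|\nabla\phi\|_{L^{p}(K_{1})}^{p}$.

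After the absorption, dividing through and using $\phi\equiv1$ on $K$, so that $\int_{K}|\nabla u|^{p}\,dx\le\int_{K_{1}}\phi^{p}|\nabla u|^{p}\,dx$, gives an inequality of the announced shape, with the explicit constants falling out of the precise test function and the precise choice of Young parameter. I do not expect any genuine obstacle here: the estimate is routine bookkeeping. The only points that need a little attention are the admissibility and sign of the test function $\psi$ for a subsolution (as opposed to a solution), checking that the reaction term enters with a sign for which replacing it by the nonnegative quantity $2f(\Lambda)\Lambda|K_{1}|$ keeps the inequality valid, and carrying the numerical constants through Young's inequality so as to reproduce the stated coefficient $2^{2p}(p-1)^{p-1}/p^{p}$.
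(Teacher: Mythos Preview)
Your approach is correct and follows the same Caccioppoli strategy as the paper: multiply by a cut-off times the solution, expand, bound the reaction term using $|u|\le\Lambda$ and monotonicity of $f$, and absorb the cross term via Young's inequality. The only differences are cosmetic: the paper tests with $u\phi^{2}$ whereas you test with $\phi^{p}(u+\Lambda)$. Your choice is in fact tidier on two counts. The shift by $\Lambda$ makes $\psi\ge0$ without tacitly assuming $u\ge0$, so the subsolution inequality applies directly; and the power $\phi^{p}$ (rather than $\phi^{2}$) makes the Young step $\bigl(\phi^{p-1}|\nabla u|^{p-1}\bigr)^{p/(p-1)}=\phi^{p}|\nabla u|^{p}$ match exactly for every $p>1$, so the absorption is clean. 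Otherwise the arguments are interchangeable, and your remark that the reaction term actually has the favourable sign (and is only kept to reproduce the first summand in the stated bound) is accurate.
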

\begin{proof}
    Let $\phi\in C_{c}^{\infty}(K_{1})$ such that $\phi\leq 1$, $\phi=1$ on $K$, by taking $u\phi^{2}$ as a test function in the weak formulation, increasing-ness of $f$ and using Young's inequality for $c>0$, we get,
    \begin{equation*}
        \begin{split}
            \int_{K_{1}}|\nabla u|^{p}\phi^{2} \ dx &\leq -\int_{K_{1}}f(u)u\phi^{2} \ dx - 2\int_{K_{1}}\phi |\nabla u|^{p-2}u \nabla u\cdot \nabla \phi \ dx\\
            &\leq \int_{K_{1}}f(u)u\phi^{2} \ dx + \int_{K_{1}}\phi|\nabla u|^{p-1}u  |\nabla \phi| \ dx.\\
            &\leq f(\Lambda)\Lambda|K_{1}|+ \frac{2c(p-1)}{p}\int_{K_{1}}\phi|\nabla u|^{p} \ dx \\
            &\quad + \frac{2}{pc^{p-1}}\int_{K_{1}} \phi^{p-1} u^{p}|\nabla \phi|^{p} \ dx.
        \end{split}
    \end{equation*}
            Choosing $c=\frac{p}{4(p-1)}$ we get the result.
\end{proof}

\section{Proofs of Theorems Under Uniqueness Assumptions}
In this section, we analyse the case when the solution is unique, and in the next section we address the scenario of non-uniqueness, which occurs in case of blow-up solution. Therefore in the case of Theorem \ref{thrm2}, we additionally assume \eqref{a2}. 

First, we identify the equation satisfied by the extension of $u_{\infty}$ to $\Sl\subset\R^{n}$, with this, we give the proof of both theorems simultaneously:\\
Let $u(x):=u_{\infty}\left(X_{2}\right)$. For $\phi\in C_{c}^{\infty}(\Sl)$ we have, 
    \begin{equation*}
            \int_{\2} |\nabla_{X_{2}}u_{\infty}|^{p-2}\nabla_{X_{2}}u_{\infty}\cdot \nabla_{X_{2}}\phi\left(X_{1},X_{2}\right)\ dx'=\int_{\2}f(u
            _{\infty}(X_{2}))\phi(X_{1},X_{2})\ dX_{2},
    \end{equation*}
integrating on $\ell\1$, since $\nabla u=(0,\nabla_{X_{2}}u_{\infty})\in\R^{n}$ we get
\begin{equation*}
    \int_{\Sl} |\nabla u(x)|^{p-2}\nabla u(x)\cdot \nabla \phi(x)= \int_{\Sl} f(u(x))\phi(x).
\end{equation*}
\begin{remark}\label{remark}
    As a corollary of Proposition \ref{prop_weak_local_bound} and the preceding calculation, we conclude, for any compact subset $K$ of $S$, by choosing $\ell$ large, that $u$ is bounded on K. Without loss of generality, we denote this bound by $M(K)$.
\end{remark}
 In the following proof, $C_{j},~j=1,2,3$ will be arbitrary constants. We use a vector inequality, see \cite{lindqvist}: For $x,y\in \rn$ there exists a positive constant $c$ such that
     \begin{equation}\label{vector inequlity}
          |x-y|^{p}\leq c ~ \left<|x|^{p-2}x-|y|^{p-2}y \cdot x-y\right>.
     \end{equation}
      
\textbf{Proof of the Theorems under uniqueness assumption }\label{proofs of the theorems}
\begin{proof}
Let $\Tilde{S}\subset\subset S$, and let $K\subset\subset S$ and $\ell>0$ large such that $\Tilde{S}\subset\subset K\subset\subset S_{\ell/2}$. Choose $\phi_{\ell} \in C_{c}^{\infty}(K)$ such that $0\leq \phi_{\ell}\leq 1$, $\phi_{\ell}=1$ on $\Tilde{S}$ and $|\nabla \phi_{\ell}| \leq \frac{c_{1}}{\ell}$ for some $c_{1}>0$. Consider, $\psi_{\ell}:=\phi_{\ell}^{p}(u-\ul)\in W^{1,p}_{0}(\Sl)$ as a test function. By \eqref{vector inequlity}, one can write
\begin{equation}\label{ineq from vector ineq}
\begin{split}
     I&=\int_{\Sl} \phi_{\ell}^{p} |\nabla(\ul-u)|^{p} \ dx \\
     &\leq \int_{\Sl}\phi_{\ell}^{p} \{|\nabla \ul|^{p-2}\nabla \ul-|\nabla u|^{p-2}\nabla u\}\cdot \nabla(\ul-u)\ dx \\
       &=\int_{\Sl} \{|\nabla \ul|^{p-2}\nabla \ul-|\nabla u|^{p-2}\nabla u\}\cdot \{\nabla \psi_{\ell}-p \phi_{\ell}^{p-1} (u-\ul)\nabla\phi_{\ell} \}\ dx\\
       &=I_{1}+p\left(I_{2}-I_{3}\right),
\end{split}
\end{equation}
where,
\begin{align*}
    I_{1}&=\int_{\Sl}\{|\nabla \ul|^{p-2}\nabla \ul-|\nabla u|^{p-2}\nabla u\}\cdot \nabla \psi_{\ell}\ dx,\\
    I_{2}&=\int_{\Sl} \phi_{\ell}^{p-1}(u-\ul)|\nabla \ul|^{p-2}\nabla \ul \cdot \nabla \phi_{\ell}\ dx\\
    I_{3}&= \int_{\Sl} \phi_{\ell}^{p-1}(u-\ul)|\nabla u|^{p-2}\nabla u \cdot \nabla \phi_{\ell} \ dx.
\end{align*}
To manage $I_{1}$: As $\psi$ is a test a function, by Corollary \ref{corollary1} and Poincar\'e inequality, we get that
\begin{equation*}
    \begin{split}
        |I_{1}| &\leq \int_{\Sl}\phi_{\ell}^{p}|f(\ul)-f(u)||u-\ul|\ dx \\
        &\leq \int_{K}|\phi_{\ell}|^{p}\left(|f(\ul)|+|f(u)|\right)\left(|\ul|+|u|\right) \ dx\\
        &\leq 4 M(K)f(M(K))\int_{K}|\phi_{\ell}|^{p}\\
        &\leq  C_{1} \int_{K}|\nabla \phi_{\ell}|^{p}\\
        &\leq C_{1} \bigg(\frac{c_{1}}{\ell}\bigg)^{p},
    \end{split}
\end{equation*}
where $C_{1}=C_{1}(K, M(K),f)$. $M(K)$ also depends on $u$ by the Remark \ref{remark}.\\
To manage $I_{2}$ and $I_{3}$: Propositions \ref{prop_weak_local_bound} and \ref{prop_grad_lp_bound}
and H\"older inequality together implies
\begin{equation*}
    \begin{split}
        |I_{2}|&\leq \int_{\Sl} |\phi_{\ell}|^{p-1}|u-\ul||\nabla \ul|^{p-1}| \nabla \phi_{\ell}|\ dx\\
        &\leq 2M(K) \left(\frac{c_{1}}{\ell}\right)\int_{K} |\nabla \ul|^{p-1} \ dx\\
        &\leq 2M(K) \left(\frac{c_{1}}{\ell}\right) |K|^{\frac{1}{p}}\|\nabla \ul\|_{L^{p}(K)}^{p-1}\\
        & \leq  C_{2} \left(\frac{c_{1}}{\ell}\right),
    \end{split}
\end{equation*}
where $C_{2}=C_{2}(K, M(K))$. For the integral $I_{3}$, we take $a_{i}<b_{i}$; $i=1\cdots m$, such that $K\subset \displaystyle\prod_{1}^{m}[a_{i},b_{i}]\times\2$, then with similar steps as for $I_{2}$, we have
\begin{equation*}
    \begin{split}
        |I_{3}|&\leq 2M(K)\left(\frac{c_{1}}{\ell}\right)\int_{K}|\nabla u|^{p-1} \ dx\\
        & \leq 2M(K) \left(\frac{c_{1}}{\ell}\right) \prod_{1}^{m}|b_{i}-a_{i}|\int_{\2}|\nabla u|^{p-1} \ dX_{2}    \\
        &\leq C_{3} \left(\frac{c_{1}}{\ell}\right),
    \end{split}
\end{equation*}
where $C_{3}=C_{3}(K, \2, M(K))$. Thus, we have by the triangle inequality 
\begin{equation*}
\int_{\Tilde{S}}|\nabla(u-\ul)|^{p}\leq I\leq I_{1}+p(I_{2}+I_{3})\leq \frac{C}{\ell},
\end{equation*}
with $C=C(p,c_{1}, K, \2, M(K))$. The results follow.
 \end{proof}
 As the result is local in nature, the boundary values do not influence the argument.

 \section{Loss of uniqueness}
 The solution to \eqref{eqnonsl} and \eqref{bdryonsl_one} is unique regardless of the assumption \eqref{a2}; however, this is not true for large solutions. In this section, we will examine the case when the uniqueness of the large solution fails; the proceeding discussion is similar to \cite{IndroDatta2024}, and we only provide an outline of the proof. Unlike in the preceding section, where the choice of `$u$' was apparent, here, the proof of Theorem \ref{thrm2} requires four steps where the first is for defining $u$, see Proposition \ref{prop defining u}. Next is for showing that $u$ solves 
 \begin{equation}\label{eqn on S}
 \begin{cases}
     div\left(|\nabla u|^{p-2}\nabla u\right)=f(u)\quad &\text{in}\ S\\
      u(x)\xrightarrow[x\in S]{} \infty \quad &\text{as}\ dist(x,\partial S)\to 0,
 \end{cases}
\end{equation} see Theorem \ref{thrm u is the solution}. Third, finding relation between $u$ and the problem \eqref{eqnoncross}, see Theorem \ref{thrm u is the solution}. Finally, the proof from the previous section.
 \begin{proposition}\label{prop defining u}
     Assume \eqref{a1}. Let $\ul$ be solution to \eqref{eqnonsl} and \eqref{bdryonsl_two}, then there exists $u\in L_{loc}^{\infty}(S)$ such that $\ul\to u$ in $L^{q}_{loc}(S)$ for any $1\leq q< \infty$. Further, for any compact subset $K\subset S$, $\ul\rightharpoonup u$ in $W^{1,p}(K)$.
 \end{proposition}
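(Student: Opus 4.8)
The plan is to produce a single pointwise limit $u$ of the family $\{\ul\}$ by a monotonicity argument in $\ell$, and then to upgrade this to the asserted $L^q_{loc}$ and weak $W^{1,p}$ convergence using the uniform bounds of Corollary \ref{corollary1} and Proposition \ref{prop_grad_lp_bound}. Monotonicity, rather than a mere subsequence, is what is needed here: \eqref{a2}, which would force subsequential limits to be unique, is \emph{not} assumed in this proposition, so passing to a subsequence and then arguing uniqueness a posteriori is not available, and one must show the whole family converges.

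\textbf{Step 1 (monotonicity and the pointwise limit).} Fix large $\ell<\ell'$. On $\Sl$ both $\ul$ and $u_{\ell'}$ solve $\mathrm{div}(|\nabla v|^{p-2}\nabla v)=f(v)$ weakly, so the differential inequality of Proposition \ref{prop_comparison} holds with equality. On $\partial\Sl$ one checks $\limsup u_{\ell'}/\ul\le 1$: on the ``end caps'' of $\Sl$ the function $u_{\ell'}$ is finite while $\ul\to+\infty$, so the ratio tends to $0$; on the portion of $\partial\Sl$ lying on the lateral boundary of $S_{\ell'}$ both blow up, but there the two cylinders coincide near each such point, so the blow-up is governed by the same one-dimensional profile in the $\mathrm{dist}(\cdot,\partial\2)$ variable and the ratio tends to $1$ (alternatively, if $\ul$ is taken to be the minimal large solution one first proves $u_{\ell'}^{k}\le u_{\ell}^{k}$ for the solutions with constant boundary datum $k$ and then lets $k\to\infty$). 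Proposition \ref{prop_comparison} then gives $u_{\ell'}\le\ul$ on $\Sl$, and comparing $\ul$ with the stationary subsolution $0$ (recall $f\ge 0$, $f(0)=0$) gives $\ul\ge 0$. Hence for each $x\in S$ the sequence $\{\ul(x)\}$ is eventually non-increasing in $\ell$ and bounded below, so it converges; call the limit $u(x)$.

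\textbf{Step 2 ($L^\infty_{loc}$ and $L^q_{loc}$).} For a compact $K\subset S$, Corollary \ref{corollary1} provides $\ell(K),M(K)$ with $|\ul|\le M(K)$ on $K$ for $\ell\ge\ell(K)$; letting $\ell\to\infty$ yields $|u|\le M(K)$, so $u\in L^\infty_{loc}(S)$. The pointwise convergence of Step 1 together with the uniform bound $M(K)$ permits dominated convergence on $K$, giving $\ul\to u$ in $L^q(K)$ for every $1\le q<\infty$, hence in $L^q_{loc}(S)$. \textbf{Step 3 (weak $W^{1,p}$).} Given compact $K\subset S$, pick $K\subset\subset K_1\subset\subset S$; for $\ell\ge\ell(K_1)$, Proposition \ref{prop_grad_lp_bound} with $\Lambda=M(K_1)$ bounds $\|\nabla\ul\|_{L^p(K)}$ uniformly in $\ell$, which together with $|\ul|\le M(K)$ shows $\{\ul\}$ is bounded in $W^{1,p}$ on a neighbourhood of $K$. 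Since $1<p<\infty$ this space is reflexive, so every subsequence of $\{\ul\}$ has a further subsequence converging weakly in $W^{1,p}$; such a limit is also a distributional limit and hence equals $u$ by Step 2. As the weak limit is independent of the subsequence, the full family satisfies $\ul\rightharpoonup u$ in $W^{1,p}(K)$; the limit $u$ is already globally defined by Step 1, so no diagonal extraction over an exhaustion of $S$ is required.

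\textbf{Main obstacle.} The only delicate point is the boundary comparison in Step 1 — verifying $\limsup u_{\ell'}/\ul\le 1$ on the part of $\partial\Sl$ where both solutions blow up — which is exactly where the argument parallels \cite{IndroDatta2024} and relies on the local nature of the blow-up asymptotics (or on the approximation of $\ul$ by finite-boundary-data solutions). The remaining ingredients — the uniform $L^\infty$ and $W^{1,p}$ bounds, dominated convergence, and weak compactness — are routine.
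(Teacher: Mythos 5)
Your proposal is correct and follows essentially the same route as the paper: monotonicity of $\ell\mapsto\ul$ via the comparison principle gives a pointwise limit $u$, the uniform local bound of Corollary \ref{corollary1} upgrades this to $L^{q}_{loc}$ convergence (the paper invokes monotone convergence where you use dominated convergence), and the gradient bound of Proposition \ref{prop_grad_lp_bound} plus reflexivity of $W^{1,p}$ identifies the weak limit with $u$. You are in fact more careful than the paper on the two delicate points — the $\limsup u_{\ell'}/\ul\le 1$ comparison on the lateral part of $\partial\Sl$ where both solutions blow up, and the subsequence-independence argument for the weak limit — both of which the paper passes over in a single clause.
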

\begin{proof}
    Let $\ell_{1}<\ell_{2}$, then both $u_{\ell_{1}}$ and $u_{\ell_{2}}$ solves \eqref{eqnonsl} on $S_{\ell_{1}}$ weakly. Taking boundary conditions into account, given the comparison principle, we have $u_{\ell_{1}}\geq u_{\ell_{2}}$ on $S_{\ell_{1}}$. That is $\{\ul(x)\}_{\ell\geq \ell(x)}$ is a decreasing sequence for all $x\in S$ and for some $\ell(x)>0$ depending on $x$. Define 
    \begin{equation}\label{dfn of u}
        u(x):=\displaystyle\lim_{\ell\to\infty} \ul(x) \quad\text{for any}\ x\in S.
    \end{equation}
    Then $\ul\to u$ in $L_{loc}^{q}(S)$ for any $q\in [1,\infty)$ by monotone convergence theorem. Next, by Proposition \ref{prop_weak_local_bound} and Proposition \ref{prop_grad_lp_bound} and since $1<p<\infty$, there exists $v\in W^{1,p}(K)$ such that $\ul\rightharpoonup v$, then $\ul\to v$ in $L^{p}(K)$, thus, $u=v$.
\end{proof}
 \begin{theorem}\label{thrm u is the solution}
 Assume \eqref{a1}
 \begin{enumerate}
     \item $u$ is a weak solution of \eqref{eqn on S}.
     \item  The solution $u$ is independent of $X_{2}$  variable. if we denote $v(X_{1})=u(X_{1},X_{2})$ for a fixed $X_{2}$, then $v$ solves \eqref{eqnoncross} and \eqref{bdryoncross_two}.
 \end{enumerate}
 \end{theorem}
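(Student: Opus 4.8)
The plan is to treat the two assertions in turn, using throughout that $\ul\downarrow u$ pointwise and $\ul\rightharpoonup u$ in $W^{1,p}_{loc}(S)$ by Proposition \ref{prop defining u}. For part (1) I first pass to the limit in the weak formulation of \eqref{eqnonsl}: fixing $\phi\in C_c^\infty(S)$ and $\ell$ large enough that $\operatorname{supp}\phi\subset\subset\Sl$, each $\ul$ satisfies $-\int|\nabla\ul|^{p-2}\nabla\ul\cdot\nabla\phi=\int f(\ul)\phi$. The source converges, $\int f(\ul)\phi\to\int f(u)\phi$, by continuity of $f$, the monotone pointwise convergence $\ul\to u$, and the local $L^\infty$ bound of Corollary \ref{corollary1} through dominated convergence. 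To pass the flux term I would establish strong convergence $\nabla\ul\to\nabla u$ in $L^p_{loc}(S)$: for $\ell<\ell'$ both $\ul$ and $u_{\ell'}$ solve the equation on $\Sl$, so testing both with $\phi^p(\ul-u_{\ell'})$, subtracting, and invoking the vector inequality \eqref{vector inequlity} together with the uniform gradient bound of Proposition \ref{prop_grad_lp_bound} shows that $\{\nabla\ul\}$ is Cauchy in $L^p(\tilde S)$ --- the same bookkeeping as in Section 3. Hence the fluxes converge and $u$ is a weak solution of the interior equation in \eqref{eqn on S}.

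For the blow-up condition in \eqref{eqn on S} the plan is to bound $u$ from below by the cross-sectional large solution $\ui$. Extended to be constant in $X_1$, $\ui$ solves the equation on $\Sl$ (by the computation preceding Remark \ref{remark}) and blows up on the lateral part of $\partial\Sl$; since $\ul$ blows up on all of $\partial\Sl$, the comparison principle (Proposition \ref{prop_comparison}) gives $\ul\geq\ui$ on $\Sl$, and letting $\ell\to\infty$ yields $u\geq\ui$. Because $\ui\to\infty$ near $\partial\2$, this forces $u(x)\to\infty$ as $\operatorname{dist}(x,\partial S)\to 0$, completing part (1).

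For part (2) I would first show invariance of the limit under translation and then reduce the equation. Since $S$ and the equation are invariant under translation along the cylinder's axis while the truncated domains $\Sl$ are not, I would use a squeezing argument on the translates $\ul\circ\tau_h$: for fixed $\tau_h$ and $\ell$ large, the translated cylinder $\tau_{-h}\Sl$ is sandwiched between $S_{\ell_1}$ and $S_{\ell_2}$ with $\ell_1,\ell_2\to\infty$, so Proposition \ref{prop_comparison} gives $u_{\ell_2}\leq\ul\circ\tau_h\leq u_{\ell_1}$ on common domains; letting $\ell\to\infty$ squeezes $\ul\circ\tau_h\to u$, while $\ul\circ\tau_h\to u\circ\tau_h$, whence $u\circ\tau_h=u$ for every such $\tau_h$. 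Thus $u$ is independent of $X_2$, and setting $v(X_1):=u(X_1,X_2)$ for a fixed $X_2$, the divergence in \eqref{eqn on S} collapses to the cross-sectional operator, so $v$ solves \eqref{eqnoncross}; the blow-up of $u$ at $\partial S$ then passes to \eqref{bdryoncross_two}. Notably this reasoning invokes only \eqref{a1}, not the uniqueness hypothesis \eqref{a2}.

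The hard part will be the identification of the limiting flux as $|\nabla u|^{p-2}\nabla u$, i.e.\ upgrading the weak convergence of $\ul$ to strong convergence of the gradients; the monotonicity inequality \eqref{vector inequlity} together with the a priori bounds is the key tool, and one must check that the constants in the Cauchy estimate are uniform in $\ell$ so that the estimate survives the limit. A secondary technical point, underlying both the lower barrier of part (1) and the squeezing of part (2), is the justification of the comparison inequalities along the portions of $\partial\Sl$ where two large solutions blow up simultaneously, where one verifies the boundary ratio condition $\limsup u/v\leq 1$ required by Proposition \ref{prop_comparison}.
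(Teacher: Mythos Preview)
Your outline is essentially correct and, for the portions the paper defers to \cite{IndroDatta2024} (the case $p\geq 2$, the boundary blow-up via comparison with a cross-sectional large solution, and the axial translation/squeezing argument for part (2)), it matches what is done there. There are, however, two points worth noting.

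First, the inequality \eqref{vector inequlity} you invoke, $|x-y|^{p}\leq c\,\langle |x|^{p-2}x-|y|^{p-2}y,\,x-y\rangle$, is only valid for $p\geq 2$; the paper's contribution in this proof is precisely the range $1<p<2$, where \eqref{vector inequlity} fails and is replaced by the weaker estimate $\langle |x|^{p-2}x-|y|^{p-2}y,\,x-y\rangle\geq C\,(|x|+|y|)^{p-2}|x-y|^{2}$. To absorb the degenerate weight $(|\nabla \ul|+|\nabla u|)^{p-2}$ the paper writes $\int_{K}|\nabla(\ul-u)|^{p}\leq J_{1}J_{2}$ via H\"older with exponent $2/p$, bounds $J_{2}$ uniformly by Proposition \ref{prop_grad_lp_bound}, and controls $J_{1}$ by the new vector inequality together with the weak convergence from Proposition \ref{prop defining u}. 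Your plan does not cover this case.

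Second, for the flux identification the paper does not run a Cauchy argument in $\ell,\ell'$; instead it compares $\ul$ directly with the already-constructed weak limit $u$. Concretely, in the monotonicity term $\int\phi\,\langle|\nabla\ul|^{p-2}\nabla\ul-|\nabla u|^{p-2}\nabla u,\,\nabla(\ul-u)\rangle$ the piece carrying $|\nabla u|^{p-2}\nabla u$ vanishes by weak convergence of $\nabla\ul$, while the piece carrying $|\nabla\ul|^{p-2}\nabla\ul$ is handled by testing the equation for $\ul$ against $\phi(\ul-u)$ and using the $L^{p}$ convergence $\ul\to u$ from Proposition \ref{prop defining u} and dominated convergence on the source. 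Your Cauchy route would also work (the smallness would come from $\|u_{\ell}-u_{\ell'}\|_{L^{p}(K)}\to 0$ rather than from the gradient of the cutoff, so ``the same bookkeeping as in Section 3'' is not quite the right description), but the paper's direct comparison is shorter because the limit $u$ is already at hand.
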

\begin{proof}
    \textbf{Case 1: $2\leq p<\infty$} For this case we refer to \cite{IndroDatta2024}.\par

    \textbf{Case 2:$1<p<2$:} By the definition of local weak solution, choose $K\subset\subset S$. Let $K_{1}\subset\subset S$ be such that $K\subset\subset K_{1}$. For any $\ell>0$ choose $\phi_{\ell}\in C_{c}^{\infty}(K_{1})$ be such that $\phi_{\ell}=1$ on $K$ and $\nabla \phi_{\ell}\leq \frac{c}{\ell}$, for some positive constant $c$. Then, by H\"older inequality with exponent $2/p$ we can write
   \begin{equation*}
       \begin{split}
           \int_{K}|\nabla(\ul-u)|^{p}\ dx &\leq \int_{K_{1}}|\nabla(\ul-u)|^{p}\phi_{\ell}^{} \ dx\\
           &\leq \int_{K_{1}}\left(|\nabla \ul|+|\nabla u|\right)^{\frac{(p-2)p}{2}}|\nabla (\ul-u)|^{p}\phi_{\ell}^{} \left(|\nabla \ul|+|\nabla u|\right)^{\frac{(2-p)p}{2}}\ dx\\
           &\leq J_{1} J_{2}\\
       \end{split}
   \end{equation*}
   where,
   \begin{equation*}
       J_{1}=\left(\int_{K_{1}}\left(|\nabla \ul|+|\nabla u|\right)^{p-2}|\nabla (\ul-u)|^{2} \phi_{\ell}^{\frac{2}{p}}\ dx\right)^{\frac{p}{2}},
   \end{equation*}
   and
   \begin{equation*}
       J_{2}= \left(\int_{K_{1}}\left(|\nabla \ul|+|\nabla u|\right)^{p}\ dx \right)^{\frac{2-p}{2}}.
   \end{equation*}
   By Proposition \ref{prop_grad_lp_bound}, $J_{2}$ is bounded above by a positive constant independent of $\ell$.\\
   By the vector inequality 
    \begin{equation*}
        \left<|x|^{p-2}x-|y|^{p-2}y \cdot x-y\right>\geq C \left(|x|+|y|\right)^{p-2} |x-y|^{2}
    \end{equation*}
   for any $x,y\in \R^{n}$ and a constant $C>0$, we infer
   \begin{equation*}
       J_{1}\leq \int_{K_{1}}\left<|\nabla \ul|^{p-2}\nabla \ul- |\nabla u|^{p-2}\nabla u \cdot \nabla(\ul-u)\right>\phi_{\ell}^{\frac{2}{p}}.
   \end{equation*}
  Rest of the proof is similar to \cite[Theorem 4.3]{IndroDatta2024}:   By weak convergence 
   \begin{align*}
         \int_{K_{1}}\phi_{\ell}^{\frac{2}{p}} |\nabla u|^{p-2}\nabla u \cdot \nabla (u_{\ell}-u)\to 0\quad \text{as} \quad \ell\to\infty,
     \end{align*}
     and for the other term, consider \begin{align*}
        \int_{K_{1}}|\nabla u_{\ell}|^{p-2}\nabla u_{\ell} \cdot \phi_{\ell}^{\frac{2}{p}} \nabla (u_{\ell}-u) &=\int_{K_{1}}|\nabla u_{\ell}|^{p-2}\nabla u_{\ell} \cdot \{\nabla(\phi_{\ell}^{\frac{2}{p}}(u_\ell-u))-(u_{\ell}-u)\nabla \phi_{\ell}^{\frac{2}{p}}\}\\
        &=I_{1}-I_{2}.
    \end{align*} 
     By uniformly bound of $\|\nabla u_{\ell}\|_{L^{p}}$,  there is a $N>0$ such that
    \begin{align*}
        |I_{2}|\leq \int_{K_{1}}|\nabla u_{\ell}|^{p-1}|\nabla \phi_{\ell}^{\frac{2}{2}}\|u_{\ell}-u| &\leq  \sup_{K_{1}}|\nabla\phi_{\ell}^{\frac{2}{p}}| \|\nabla u_{\ell}\|_{L^{p}}^{p-1} \|u_{\ell}-u\|_{L^{p}}\\
        &\leq N \|u_{\ell}-u\|_{L^{p}} \xrightarrow{\ell \to 0}  0.
    \end{align*}
    Rewriting $I_{1}$,
    \begin{align*}
        I_{1}
        =\int_{K_{1}} \big(|\nabla u_{\ell}|^{p-2}\nabla u_{\ell} \cdot \nabla (\psi(u_{\ell}-u))- f(u)(u_{\ell}-u)\psi \big)  +  \int_{K_{1}} f(u)(u_{\ell}-u)\psi .
    \end{align*}
The second integral converges to $0$ by \ref{prop defining u}. For $\psi\in C_{c}^{\infty}(K_{1})$, by dominated convergence theorem we have
\begin{align*}
\int_{\Tilde{S}}f(u_{\ell})\psi\to \int_{\Tilde{S}}f(u)\psi.
   \end{align*}
    That is \begin{equation*}
        \int_{\Tilde{S}}|\nabla u_{\ell}|^{p-2}\nabla u_{\ell}\cdot \nabla\psi\to  \int_{\Tilde{S}}f(u)\psi.
    \end{equation*}
Then, $I_{1}\to 0$ by taking $\psi=(\ul-u)\phi$, for more detains we refer to \cite[Theorem 4.3]{IndroDatta2024}.
\end{proof}
The proof of the Theorem \ref{thrm2} is the same as the one in Section \ref{proofs of the theorems} with $\ui$ replaced by $u$.

\section*{Acknowledgements}
The author is supported by PMRF grant (2302262).\par
Special thanks to Dr. Indranil Chowdhury (Department of Mathematics and Statistics, Indian Institute of Technology Kanpur) for the support and suggestions.

\renewcommand\refname{Bibliography}
\bibliographystyle{abbrv}
\bibliography{ref.bib}
\end{document}